\newcommand{\bone}{\ensuremath{\bm{1}}}
\def\cG{\mathcal G}
\newcommand{\R}[1]{{\rm I\! R}^{#1}}
\theoremstyle{plain}% Theorem-like structures
\newtheorem{theorem}{Theorem}[section]
\newtheorem{lemma}[theorem]{Lemma}
\theoremstyle{definition}
\theoremstyle{remark}
\begin{document}

%\jvol{00} \jnum{00} \jyear{2015} \jmonth{February}

%\articletype{GUIDE}

\title{On the Maximal Error of Spectral Approximation of Graph Bisection}

\author{
\name{John C. Urschel\textsuperscript{a,b,c,$\ast$}\thanks{$^\ast$Corresponding author. Email: jcurschel@gmail.com}
\and 
Ludmil T. Zikatanov\textsuperscript{c,d,$\dagger$}\thanks{$\dagger$The work of this author was supported in part by NSF DMS-1418843 and NSF DMS-1522615.}
}
\affil{\textsuperscript{a}Baltimore Ravens, NFL, Owings Mills, MD, USA;
  \textsuperscript{b}Department of Mathematics, Massachusetts Institute of Technology,
  Cambridge, MA, USA.
  \textsuperscript{c}Department of Mathematics, Penn State University,
  University Park, PA, USA;
  \textsuperscript{d}Institute for Mathematics and 
  Informatics, Bulgarian Academy of Sciences, Sofia, Bulgaria
}  
}
\maketitle

\begin{abstract}
  Spectral graph bisections are a popular heuristic
    aimed at approximating the solution of the NP-complete graph
  bisection problem. This technique, however, does
    not always provide a robust tool for graph partitioning. Using a
  special class of graphs, we prove that the
  standard spectral graph bisection can produce bisections that are
  far from optimal. In particular, we show that the maximum error in
  the spectral approximation of the optimal bisection (partition sizes
  exactly equal) cut for such graphs is bounded
  below by a constant multiple of the order of the graph squared.
\end{abstract}

\begin{keywords}
graph Laplacian; Fiedler vector; spectral bisection
\end{keywords}

\begin{classcode}05C40, 05C50 \end{classcode}

\section{Introduction and preliminaries}

A graph bisection is a partition of the vertex set into two parts of
equal order, thereby creating two subgraphs.  A bisection is
considered good if the number of edges between the two partitions is
small. Finding the graph bisection that minimizes
the number of edges between the two partitions is NP-complete
\cite{garey1979computers}. Despite this, graph bisections have found
application in scientific computing, VLSI design, and task scheduling
\cite{Metis0,Metis1}. A variety of heuristic algorithms have been
implemented in an attempt to approximate the optimal graph
bisection. One of the most popular techniques
  approximates the optimal cut via the zero level set of the
  discrete Laplacian eigenvector corresponding to
  the smallest non-zero eigenvalue. Naturally, such technique is
  called a spectral bisection. The efficient construction and the
  properties of spectral bisections is and has been an active area of
research
\cite{1990PothenA_SimonH_LiouK-aa,1988PowersD-aa,urschel2014cascadic}. Experiments
have shown that this technique works well in practice, and it has been
proven that spectral bisection works well on bounded degree planar
graphs and finite element meshes \cite{spielman1996spectral}.
For general graphs, however, this is not the case,
  and, in what follows, we show that spectral bisection is not a
  robust technique for approximating the optimal cut. In particular,
we construct a special class of graphs for which the
maximum error in the spectral approximation of the optimal bisection
is bounded below by a constant multiple of the order squared.

We begin the technical discussion by briefly
  introducing notation and definitions from graph theory, referring
  the readers to \cite{2010DiestelR-aa} for details. Let
$G = (V,E)$, $n=|V|$, $n_E = |E|$, be a simple, connected, undirected
graph. The graph Laplacian associated with $G$ is the matrix $L(G)$,
defined via the following bilinear form
\[
\langle L(G) u,v \rangle = \sum_{j=1}^n\sum_{k\in N(j)}
 (u_i-u_j)(v_i - v_j), \quad 
\forall u, \; v\in \R{n}. 
\]
Here, 
$\langle \cdot,\cdot \rangle$ is the standard Euclidean scalar product
on $\R{n}$, and 
$N(i)$ is the set of neighbors of $i\in V$; namely,
\(N(i) = \left\{j\in V \;\big|\; (i,j)\in E\right\}\).  Note that
\begin{eqnarray*}
&&[L(G)]_{ij}  = -1,\;\mbox{when}\; (i,j)\in E; \quad 
[L(G)]_{ij}  = 0, \;\mbox{when}\;(i,j)\notin E,\;\mbox{and}\;i\neq j;\\
&&[L(G)]_{ii}=-\sum_{j\in N(i)} [L(G)]_{ij}.
\end{eqnarray*}
In the following, we denote the eigenvalues of $L(G)$ by
$\lambda_1(G)\le \lambda_2(G),\ldots\le \lambda_n(G)$ and the
corresponding set of $\ell^2(\R{n})$-orthogonal eigenvectors by
$\varphi_1(G),\varphi_2(G),\ldots,\varphi_n(G)$.  We have that
$\lambda_1=0$ and
$\varphi_1(G) = (\underbrace{1,\ldots,1}_n)^T=:\bone_n$ for all
graphs, and that $\lambda_2(G)>0$ for all connected graphs $G$. The
eigenvalue $\lambda_2(G)$ is known as the \emph{algebraic
  connectivity} of $G$, denoted by $a(G)$.  The eigenvector
$\varphi_2(G)$ is known as the Fiedler vector, or \emph{characteristic
  valuation} of $G$ \cite{F1,F2,F3}. When $a(G)$ is a repeated
eigenvalue of multiplicity $k$, the Fiedler vectors
  lie in the $k$-dimensional eigenspace corresponding to $a(G)$,
  denoted here by $E_{\lambda_2}[L(G)]$.  When $G$ is clear from the
context, we will omit the argument ``$(G)$'' in
$\lambda_k(G)$, $\varphi_k(G)$ and simply write $\lambda_k$ and
$\varphi_k$.

Further, by $\bone_{W}$ we denote the indicator vector (function)
of a set. For a subset $W\subset V$ this function is defined as follows
\[
\bone_{W} \in \R{n},\quad 
[\bone_{W}]_k = \left\{
\begin{array}{ll} 1, & k\in W,\\
0, & k\notin W. 
\end{array}
\right. , \quad 
\bone_n=\bone_{\{1,\ldots,n\}}.
\]

We recall the usual definition of a median value $M(v)$ for $v\in
\R{n}$. If $w$ is a nondecreasing rearrangment of $v$, that is, there
exists a permutation $\pi$ such that $w=\pi v$ and $w_1\le
w_2\le\ldots\le w_n$, then
\[
M(v) = \left\{\begin{array}{ll}
w_{m}, & n=2m-1,\\
\frac{1}{2}(w_m+w_{m+1}), & n=2m.\\
\end{array}
\right.
\] 
The set of all cuts of $G$ is identified with the set of
decompositions of the set of vertices $V$ as a union of  $W\subset V$
and its complement $W^c\subset V$, isomorphic to the set of vectors $v\in \R{n}$ such that
$v_k\in \{-1,1\}$, $k=1:n$. For a vector
$v\in \R{n}$, its median cut is defined as corresponding to the vector
$C_M(v)$, where
\[
C_M(v) = \bone_{W}-\bone_{W^c}, \quad W=\{k\;\big|\; v_k > M(v)\}. 
\]

Adopting notation from \cite{spielman1996spectral}, we consider the graph partitioning problem, that is, partitioning $V$
into two disjoint sets $S$ and $S^c$, $S \cup S^c = V$, where
$|S| \approxeq |S^c|$, such that the edge cut (number of the edges with
one vertex in $S$ and one in $S^c$) is minimized. In the notation we
just introduced, this is equivalent to 
\begin{eqnarray*}
&&S= \arg\min\{F(W)\;\big|\; W\subsetneq V, \; W\neq \emptyset\},\\
&&F(W) = \frac{1}{2}\langle L(G) (\bone_{W}-\bone_{W^c}), \bone_{W}-\bone_{W^c}\rangle. 
\end{eqnarray*}
The relation $|S| \approxeq |S^c|$ quite ambiguous, and one of the most common approaches to resolve this is to
minimize over the sets $|W| = \left\lfloor \tfrac{n}{2} \right\rfloor$. In what follows, we will refer to this as a \emph{bisection}. An alternative is to minimize the cut ratio function, given by
\[
\operatorname{\phi}(W) = \frac{ F(W)} { \min \{ |W|, |W^c| \} },
\] 
which we refer to as a \emph{cut ratio partition}. Finding the minimizing bisection and cut ratio partition of a graph are both NP-complete problems \cite{garey1979computers}. In what follows, we consider solely the graph bisection problem. For more on the cut ratio partition, we refer readers to \cite{wei1989towards,wei1991ratio}.

Numerous techniques have been devised and aimed at approximating the
optimal solution in polynomial time and are currently used in
graph-partitioning software~\cite{Metis0,Metis1}. Some of the
techniques involve the use of a Fiedler vector of the graph
Laplacian
\cite{1990PothenA_SimonH_LiouK-aa,1988PowersD-aa,urschel2014cascadic}. Miroslav Fiedler,
\cite{F1,F2,F3}, proved many results about algebraic connectivity and
its associated eigenvector.  General results regarding the graph
Laplacian and its spectrum can be found in
\cite{M1,M4,M2,M3,Mohar91,1991MoharB-aa}. Following
Fiedler~\cite{F1,F2}, for $x \in \R{n}$ we define
\[
i_+(x):=\{i | x_i>0 \}, \quad i_-(x):=\{i | x_i<0 \}, \quad i_0(x)= \{i | x_i = 0 \}. 
\]
For a graph $G$, $i_0 (\varphi_2 - M(\varphi_2) \bone) = \emptyset$, the bisection via a Fiedler vector $\varphi_2$ is given by 
\[
S=i_+(\varphi_2- M( \varphi_2) \bone ), \quad S^c=i_-(\varphi_2 - M( \varphi_2 ) \bone ).
\]
Techniques of this form are called spectral bisections.  The
approximation can be seen by noting that minimizing the Rayleigh
quotient of the Laplacian over integer vectors is equivalent to
minimizing the edge cut of the corresponding
partition (see, e.g.~\cite{1997ChanT_CiarletP_SzetoW-aa,Lux}).
We note that the case when 
$i_0(\varphi_2- M( \varphi_2) \bone )$ is non-empty is more complicated and still
provides a bisection, depending on how the zero valuated vertices in
$(\varphi_2- M( \varphi_2) \bone)$ are distributed between $S$ and $S^c$. For
more details on such special cases, we refer to~\cite{urschel2014spectral}.

\section{Comparison: spectral bisection and optimal bisection}
In this section we show how far from optimal
a spectral bisection can be. We note that the area of
cut quality has been studied previously. Spielman and
Teng showed that spectral partitioning performs well for bounded
degree planar graphs \cite{spielman1996spectral}. Guattery and Miller
produced a class of graphs for which the error in cut quality of a spectral bisection is bounded below by a constant multiple of the order of the graph. In addition, they found estimates for how poorly spectral techniques can perform in the cut ratio partition
\cite{guattery1995performance,guattery1998quality}. By contrast, our
focus is solely on the bisection problem, and we aim to produce a lower bound of a constant multiple of the order squared.

We introduce a special graph Laplacian and calculate a spectral
decomposition for it. Given $n=4m$, $m>0$, we define
\[
L_* = \begin{pmatrix}
I & -I & 0 & 0\\
-I & (m+1)I & -\bone\bone^T & 0\\
0 & -\bone\bone^T & (m+1)I & -I\\
0 & 0  & -I & I
\end{pmatrix}.
\]
To avoid the proliferation of indices here and in what follows, we set
$\bone=\bone_m$.  To describe the eigenbasis of $L_*$, we use the
tensor product, defined as follows:
\[
\R{pr \times qs}\ni a \otimes b = 
\begin{pmatrix} 
a_{11} b & \ldots & a_{1q} b \\ 
\vdots & \vdots & \vdots \\  
a_{p1} b & \ldots & a_{pq} b 
\end{pmatrix}, \quad a\in \R{p\times q},\quad b\in \R{r\times s}. 
\] 
Simple calculations show that the
eigen-decomposition for $L_*$ is as follows:
\begin{enumerate}
\item One simple $0$ eigenvalue with eigenvector
  $\bone_n=(1,1,1,1)^T\otimes\bone$.
\item Two simple eigenvalues $\lambda_{\pm} = m+1 \pm \sqrt{m^2+1}$ with
  eigenvectors 
\[
\phi_{\pm} =
(1,(1-\lambda_{\pm}),(\lambda_{\pm}-1),-1)^T\otimes \bone.
\]
\item Two repeated eigenvalues 
$\mu_{\pm}= \frac{m}{2}+1\pm \sqrt{\left(\frac{m}{2}\right)^2+1}$,  
each of which has a eigenspace of dimension $(2m-2)$ with a basis
\begin{eqnarray*}
&&\psi^-_k = (1,(1-\mu_{-}),0,0)^T\otimes\xi_k, \quad k=1:m-1,\\
&&\psi^-_{k+m-1} = (0,0,(1-\mu_{-}),1)^T\otimes\xi_k, \quad k=1:m-1,\\
&&\psi^+_k =  (1,(1-\mu_{+}) ,0,0)^T\otimes\xi_k,  \quad k=1:m-1,\\
&&\psi^+_{k+m-1} =  (0,0,(1-\mu_{+}) ,1)^T\otimes\xi_k,  \quad k=1:m-1. 
\end{eqnarray*}
Here $\{\xi_k\}_{k=1}^{m-1}$ is a basis in $[\operatorname{span}\{\bone\}]^{\perp}$.  
\item One eigenvalue equal to $2$ with eigenvector 
$$\phi_2=(1,-1,-1,1)^T\otimes \bone.$$
\end{enumerate}

 We now consider a family of graphs for which the cut produced by
spectral bisection and the optimal bisection are ``far'' from each other. We
consider four connected undirected graphs, $G_k$, $k=1:4$, each with
$m$ vertices and corresponding graph Laplacians $L_k$, $k=1:4$. Let
$G_0=G_1 + G_2 + G_3 + G_4$ be the disjoint union of these graphs and
note that $G_0$ is a disconnected graph. Let $L_0$ be the graph
Laplacian associated with $G_0$. We now consider the graph $G$ (see
Figure~\ref{fig:graphs_example}) with Laplacian
\begin{equation}\label{eq:L}
L = L_0 + L_* =  
\begin{pmatrix}
L_1+I & -I & 0 & 0\\
-I & L_2+(m+1)I & -\bone\bone^T & 0\\
0 & -\bone\bone^T & L_3+(m+1)I & -I\\
0 & 0  & -I & L_4+I
\end{pmatrix}.
\end{equation}
\begin{wrapfigure}{r}{0.5\textwidth}
  \begin{center}
    \includegraphics[width=0.4\textwidth]{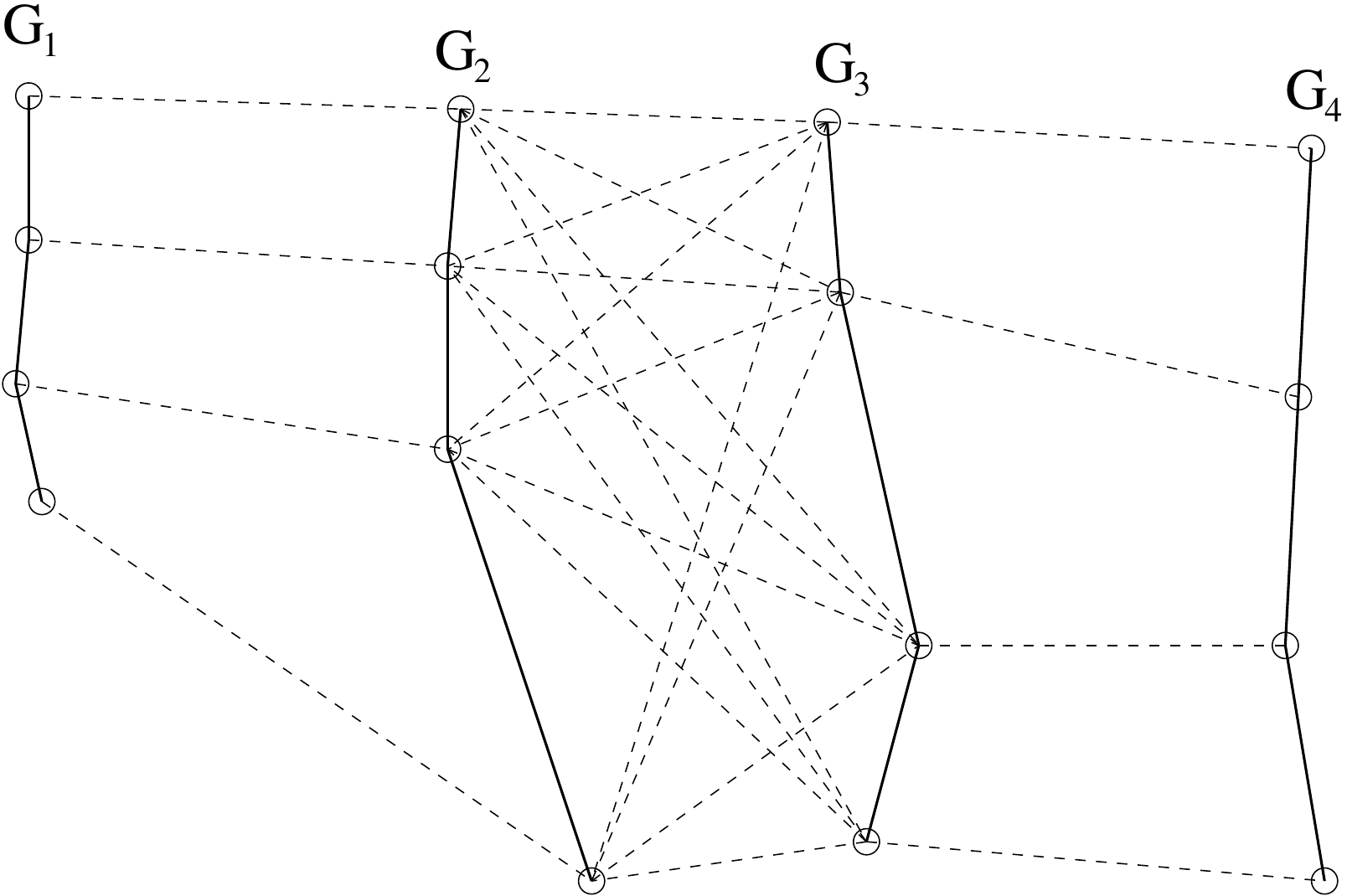}
  \end{center}
  \caption{Example of 4 graphs connected using $G_*$\label{fig:graphs_example}}
\end{wrapfigure}
The key concept in the construction of
$G$ is that the obvious best choice of $(1,-1,-1,1)^T\otimes \bone$
results in a bisection in which one of the resulting graphs is
disconnected. A main result of Fiedler shows that graphs generated by
$i_+(\varphi_2) \cup i_0(\varphi_2)$ and $i_-(\varphi_2) \cup i_0(\varphi_2)$ are necessarily
connected \cite{F2,urschel2014spectral}. This result implies that
$(1,-1,-1,1)^T\otimes \bone$ cannot be induced by a Fiedler vector.

We note that we have
$\operatorname{Ker}(L_0) = \mathcal{W}_{0}\oplus
\operatorname{span}\{\bone_n\}$, where
\begin{eqnarray*}
&& \mathcal{W}_0 =
   \operatorname{span}\{\phi_-,\phi_+,\phi_2\}.
\end{eqnarray*}
We then have the decomposition
\[
\R{n} = \operatorname{span}\{\bone_n\}\oplus \mathcal{W}_0 \oplus
\mathcal{W}_1,
\]
where
\begin{eqnarray*}
  && \mathcal{W}_1= 
     \operatorname{span}\{\psi^{-}_k,\psi^{+}_k\}_{k=1}^{2m-2} = \operatorname{Ker}(L_0)^{\perp}.
\end{eqnarray*}
Obviously, we have that
$\operatorname{Ker}(L)^{\perp}  = \mathcal{W}_0\oplus\mathcal{W}_1$
and that $\mathcal{W}_0$ and $\mathcal{W}_1$ are orthogonal. The following lemma relates the algebraic connectivity $a(G)$ and
the eigenvalues of $L_*$ under certain conditions. \\

\begin{lemma}  \label{algcon}
Let $a(G_k)$ be the algebraic connectivity of $G_k$, $k=1:4$. Assume
that $a(G_k) \ge \lambda_--\mu_-$ for all $k=1:4$. Then $a(G)=\lambda_-$, and the
corresponding Fiedler vector is $\phi_-$. \\
\end{lemma}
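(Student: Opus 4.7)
The plan is to prove that $\phi_-$ is already an eigenvector of $L$ with eigenvalue $\lambda_-$, then to verify via a Rayleigh-quotient computation that no smaller non-trivial eigenvalue exists. For the first step I write $L=L_0+L_*$ and use the structural observation from the excerpt that $\mathcal{W}_0\subset\operatorname{Ker}(L_0)$: this immediately gives $L\phi_-=L_0\phi_-+L_*\phi_-=0+\lambda_-\phi_-=\lambda_-\phi_-$. Since $\phi_-\perp\bone_n$, the Courant--Fischer principle then yields the upper bound $a(G)\le\lambda_-$.

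For the matching lower bound I would show that $\langle Lv,v\rangle\ge\lambda_-\|v\|^2$ for every $v\perp\bone_n$. The key tool is the orthogonal decomposition $\R{n}=\operatorname{span}\{\bone_n\}\oplus\mathcal{W}_0\oplus\mathcal{W}_1$ already set up in the excerpt: write $v=v_0+v_1$ with $v_0\in\mathcal{W}_0$ and $v_1\in\mathcal{W}_1$, and split
\[
\langle Lv,v\rangle=\langle L_*v,v\rangle+\langle L_0v,v\rangle.
\]
Both $\mathcal{W}_0$ and $\mathcal{W}_1$ are $L_*$-invariant (each is spanned by $L_*$-eigenvectors) and mutually orthogonal, so the $L_*$ cross terms vanish and the minimal $L_*$-eigenvalues on the two pieces (namely $\lambda_-$ on $\mathcal{W}_0$ and $\mu_-$ on $\mathcal{W}_1$) give
\[
\langle L_*v,v\rangle\ge\lambda_-\|v_0\|^2+\mu_-\|v_1\|^2=\lambda_-\|v\|^2-(\lambda_--\mu_-)\|v_1\|^2.
\]
Meanwhile $L_0v_0=0$, and since $L_0$ is block-diagonal with blocks $L_k$ its smallest non-zero eigenvalue equals $\min_k a(G_k)$; hence on $\mathcal{W}_1=\operatorname{Ker}(L_0)^\perp$ one has $\langle L_0v,v\rangle\ge\min_k a(G_k)\,\|v_1\|^2\ge(\lambda_--\mu_-)\|v_1\|^2$ by the standing hypothesis. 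Adding the two estimates, the slack $(\lambda_--\mu_-)\|v_1\|^2$ cancels exactly, leaving $\langle Lv,v\rangle\ge\lambda_-\|v\|^2$.

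Putting both bounds together yields $a(G)=\lambda_-$, and since $L\phi_-=\lambda_-\phi_-$ with $\phi_-\perp\bone_n$, the vector $\phi_-$ lies in the Fiedler eigenspace. The main (essentially only) obstacle is recognizing that $L_*$ alone is insufficient on $\mathcal{W}_1$ --- its minimal eigenvalue $\mu_-$ there is strictly smaller than $\lambda_-$ --- and that the entire purpose of the hypothesis $a(G_k)\ge\lambda_--\mu_-$ is precisely to let the $L_0$-contribution on $\mathcal{W}_1$ make up this exact shortfall. Because the hypothesis is non-strict, $\lambda_-$ may fail to be simple, which is consistent with the lemma only asserting that $\phi_-$ is \emph{a} Fiedler vector.
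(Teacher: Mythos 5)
Your proposal is correct and takes essentially the same approach as the paper: both arguments rest on the orthogonal decomposition $\R{n}=\operatorname{span}\{\bone_n\}\oplus\mathcal{W}_0\oplus\mathcal{W}_1$ together with the same two estimates, namely $\langle L_*\varphi,\varphi\rangle\ge\mu_-\|\varphi\|^2$ and $\langle L_0\varphi,\varphi\rangle\ge\min_k a(G_k)\,\|\varphi\|^2\ge(\lambda_--\mu_-)\|\varphi\|^2$ on $\mathcal{W}_1=\operatorname{Ker}(L_0)^{\perp}$, versus the value $\lambda_-$ on $\mathcal{W}_0$. The only difference is organizational: the paper minimizes the Rayleigh quotient of $L$ on each invariant subspace separately (invoking that the Fiedler subspace lies in one of them), whereas you verify $L\phi_-=\lambda_-\phi_-$ directly and prove the global bound $\langle Lv,v\rangle\ge\lambda_-\|v\|^2$ for all $v\perp\bone_n$ by splitting $v=v_0+v_1$ --- a cosmetic reshuffling that, as a small bonus, handles the borderline case $a(G_k)=\lambda_--\mu_-$ of the non-strict hypothesis more carefully than the paper's strict inequality on $\mathcal{W}_1$.
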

\begin{proof}
Because $\mathcal{W}_0$ and $\mathcal{W}_1$ are invariant subspaces, we necessarily have that the subspace of Fiedler vectors is contained in exactly one of the two spaces. We have that
\begin{eqnarray*}
\min_{\varphi \in \mathcal{W}_1} \frac{\langle L\varphi,\varphi
  \rangle}{\langle\varphi,\varphi\rangle} & \ge &
\min_{\varphi \in \mathcal{W}_1} \frac{\langle L_0\varphi,\varphi 
  \rangle}{\langle\varphi,\varphi\rangle} + \min_{\varphi \in \mathcal{W}_1}
\frac{\langle L_*\varphi,\varphi 
  \rangle}{\langle\varphi,\varphi\rangle} \\
& \ge &  \lbrack \min_{1\le k\le 4} a(G_k) \rbrack + \mu_- >  \lambda_--\mu_- + \mu_-=\lambda_-.
\end{eqnarray*}
Noting that
$$\min_{\varphi \in \mathcal{W}_0} \frac{\langle L\varphi,\varphi
  \rangle}{\langle\varphi,\varphi\rangle} = \min_{\varphi \in \mathcal{W}_0} \frac{\langle L_* \varphi,\varphi
  \rangle}{\langle\varphi,\varphi\rangle} = \lambda_-,$$
the desired result follows.
\end{proof}
\medskip

It is easy to estimate that for $m \ge 2$
$$
0<\lambda_--\mu_- \le \sqrt{ \frac{m^2}{4} + 1 } - \frac{m}{2} = \frac{1} {\sqrt{ \frac{m^2}{4} + 1 } + \frac{m}{2}   } < \frac{1}{m} .
$$ 
There are many examples of graphs for which $a(G) \ge \frac{1}{m}$,
including the path graph $P_m$, the cycle graph $C_m$, the complete
graph $K_m$, the bipartite complete graph $K_{p,q}$, and the
$n$-dimensional cube $Q_m$ \cite{de2007old}. We give the following lemma, which can be quickly verified and will be useful in proving the main theorem of the paper.

\begin{lemma} \label{addvertex} Let $L$ be the Laplacian of some simple, connected, undirected graph
  $G=(V,E)$,  with eigenvalues $\lambda_k$ and eigenvectors
  $\varphi_k$, $k=1,\ldots,n$. 
  Let $\widehat G=(\widehat{V},\widehat{E})$ be the graph obtained from
  $G$ by adding one vertex to $V$ and connecting it with every vertex
  in $G$, namely, $\widehat{V} = V\cup\{ n+1 \}$ and
  $\widehat{E} = E\cup\{(1,n+1),\ldots,(n,n+1)\}$.
  Then $L(\widehat G)$ has eigenvalues
  $\widehat \lambda_1=0$, $\widehat\lambda_k=\lambda_k +1$,
  $k=2,\ldots,n$,   $\widehat\lambda_{n+1} = n+1$ and 
the corresponding eigenvectors are
  $\widehat \varphi_1=\begin{pmatrix} \varphi_1 \\ 1 \end{pmatrix}$, 
$\widehat\varphi_k =  \begin{pmatrix}
    \varphi_k \\ 0 \end{pmatrix}$, $k=2,\ldots,n$, and 
$\widehat{\varphi}_{n+1} = \begin{pmatrix} \varphi_1 \\ -n \end{pmatrix}$. \\
\end{lemma}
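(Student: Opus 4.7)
The plan is to reduce the statement to a block-matrix computation. Ordering the vertices of $\widehat G$ so that the newly added vertex is last, each original vertex of $V$ gains exactly one new neighbor while the new vertex has degree $n$ with $-1$ in every coupling entry, so in this ordering
\[
\widehat L \;=\; \begin{pmatrix} L+I & -\bone_n \\ -\bone_n^T & n \end{pmatrix}.
\]

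I would then verify each of the three claimed families of eigenpairs by direct block multiplication against this form. For $\widehat\varphi_1=\bigl(\varphi_1^T,1\bigr)^T$ this is simply the all-ones vector in $\R{n+1}$, which lies in the kernel of any graph Laplacian. For $k=2,\ldots,n$, applying $\widehat L$ to $\bigl(\varphi_k^T,0\bigr)^T$ yields $(L+I)\varphi_k=(\lambda_k+1)\varphi_k$ in the top block and $-\bone_n^T\varphi_k$ in the bottom block; the latter vanishes because $\varphi_k\perp\varphi_1=\bone_n$ for $k\ge 2$, giving precisely the eigenvalue $\lambda_k+1$. For $\widehat\varphi_{n+1}=\bigl(\bone_n^T,-n\bigr)^T$, using $L\bone_n=0$ the top block evaluates to $\bone_n+n\bone_n=(n+1)\bone_n$ and the bottom block evaluates to $-n-n^2=(n+1)(-n)$, confirming the eigenvalue $n+1$.

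To finish I would observe that these $n+1$ vectors are mutually orthogonal: the first and last are orthogonal since $\langle\bone_n,\bone_n\rangle+1\cdot(-n)=0$; the middle ones are mutually orthogonal by orthogonality of $\{\varphi_k\}_{k=2}^n$; and each middle vector is orthogonal to the first and the last again via $\varphi_k\perp\bone_n$. Hence they form a complete eigenbasis of $\widehat L$ in $\R{n+1}$, and the spectrum/eigenvector assertions follow. No real obstacle is anticipated: the entire statement is a mechanical block verification, and the only substantive ingredient is the standard orthogonality $\varphi_k\perp\bone_n$ of non-constant Laplacian eigenvectors against the constant one.
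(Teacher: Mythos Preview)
Your proposal is correct and follows exactly the paper's approach: the paper writes $\widehat L = \begin{pmatrix} L+I & -\bone \\ -\bone^T & \langle\bone,\bone\rangle \end{pmatrix}$ and simply states that verifying $\widehat L\widehat\varphi_k=\widehat\lambda_k\widehat\varphi_k$ is straightforward. Your write-up is in fact more thorough than the paper's, since you also spell out the orthogonality of the $n+1$ vectors to confirm they form a complete eigenbasis.
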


\begin{proof}
By construction, the graph Laplacian corresponding to $\widehat G$  is
\[\widehat L = \begin{pmatrix} L+I & - \bone \\ -\bone^T &
    \langle\bone, \bone\rangle \end{pmatrix}.\]
 Verifying that
  $\widehat L \widehat\varphi_k=\widehat\lambda_k\widehat\varphi_k$
  for $k=1,\ldots,(n+1)$ is rather straightforward. This concludes the proof. 
\end{proof}

From Lemmas \ref{algcon} and \ref{addvertex}, the following theorem
regarding maximum spectral approximation error quickly follows. \\

\begin{theorem}\label{thm:1}
  Let $\cG_n$ be the set of simple, connected, undirected graphs of
  order $n$, $n>48$. Then the maximum spectral approximation error over
  all graphs $G \in \cG_n$ of the bisection has bounds
\begin{eqnarray*}
 \frac{n^2}{384} < \max_{G \in \cG_n} |   \min_{y \in E_{\lambda_2}[L] } F(i_+(y-M(y) \bone)) - \min_{A
  \subset V, |A|= \lfloor \frac{n}{2} \rfloor }
  F(A)| & < & \frac{n^2}{2}.
\end{eqnarray*}
\end{theorem}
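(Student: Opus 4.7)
The plan splits into a trivial upper bound and an explicit constructive lower bound.

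For the upper bound, I would note that $F(A)$ is (up to normalization) the number of edges of $G$ with exactly one endpoint in $A$, hence nonnegative and bounded above by $|E(G)| \le \binom{n}{2} < n^2/2$. The absolute difference of any two values of $F$ is therefore strictly less than $n^2/2$, as claimed.

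For the lower bound I would construct an explicit graph $G \in \cG_n$ realizing the bound. Set $m = \lfloor n/4 \rfloor$ and $r = n - 4m \in \{0,1,2,3\}$, and take $G_1 = G_2 = G_3 = G_4 = K_m$, so that the hypothesis $a(G_k) = m \gg \lambda_- - \mu_-$ of Lemma \ref{algcon} is comfortably satisfied for the graph $G_{4m}$ with Laplacian given in \eqref{eq:L}. If $r > 0$, I would apply Lemma \ref{addvertex} in succession $r$ times to extend $G_{4m}$ to a simple, connected graph $G$ of order $n$. A short induction using that lemma shows that $a(G) = \lambda_- + r$ is simple and that the unique Fiedler vector is $\varphi_2 = (\phi_-, 0, \ldots, 0)^T$ with $r$ trailing zeros.

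Because $\lambda_- \in (0,1)$, the sorted entries of $\varphi_2$ consist of $m$ copies of $-1$, $m$ of $\lambda_- - 1$, $r$ of $0$, $m$ of $1 - \lambda_-$, and $m$ of $1$. In all four residue classes $r \in \{0,1,2,3\}$ this yields $M(\varphi_2) = 0$ and the spectral set $S = i_+(\varphi_2)$ equal to the $2m$ vertices of blocks $1$ and $2$. A direct edge count then gives a spectral cut of $m^2 + 2mr$: the $m^2$ edges between blocks $2$ and $3$ coming from the $\bone\bone^T$ block of $L_*$, plus $2m$ edges from each of the $r$ added vertices into $S$. As a competing bisection I would take $A$ consisting of block $1$, block $4$, and any $\lfloor r/2 \rfloor$ of the added vertices, giving $|A| = \lfloor n/2 \rfloor$ and cut $2m + 2mr + \lfloor r/2 \rfloor \lceil r/2 \rceil$. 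Subtracting yields a spectral error of at least $m^2 - 2m - 2$.

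Finally I would verify the elementary inequality $m^2 - 2m - 2 > n^2/384$ for $n > 48$; since this forces $m \ge 12$ and $n \le 4m + 3$, it reduces to the quadratic $368 m^2 - 792 m - 777 > 0$, which is easily checked. The principal obstacle in this plan is the structural bookkeeping under repeated vertex addition: one must confirm that the Fiedler vector retains the form $(\phi_-, 0, \ldots, 0)^T$, that its median is zero, and that $i_+$ still coincides with blocks $1$ and $2$, uniformly across the four residue classes of $n \bmod 4$. Once this structure is pinned down, the cut enumerations and the final arithmetic are routine.
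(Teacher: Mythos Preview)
Your proposal is correct and follows essentially the same approach as the paper: build $G$ from the special Laplacian $L_0+L_*$, invoke Lemmas~\ref{algcon} and~\ref{addvertex} to pin down the Fiedler vector as $(\phi_-,0,\ldots,0)^T$, and then compare the resulting spectral cut (of order $m^2$) with the cut coming from the $(1,-1,-1,1)^T\otimes\bone$ bisection (of order $m$). Your specific choice $G_k=K_m$ and your more explicit edge-counting through the $r$ added vertices are minor refinements of the paper's argument, which leaves the $G_k$ generic and bounds the added-vertex contributions more coarsely via $F(i_+(\tilde v)\cup U)<2n$.
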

\medskip

\begin{proof}
  Suppose we have $n = 4 m$ for some integer $m$. Let us choose
  $G \in \cG_n$ such that $L(G)$ corresponds to the Laplacian
  $L=L_0+L_*$, with $a(G_k) \ge \lambda_--\mu_-$, $k=1 :4$.
This gives us $a(G) = m+1 - \sqrt{m^2+1}$, with corresponding Fiedler vector $y = (1,(1-a(G)),(a(G)-1),-1)^T\otimes \bone$. We have
\begin{align*}
&F(i_+(y-M(y) \bone))  =  F(i_+(y)) = m^2 = \frac{n^2}{16}.
\end{align*}
We now consider the bisection induced by the vector $v = (1,-1,-1,1)^T\otimes \bone$. We have 
\begin{align*}
&F(i_+(v-M(v) \bone)) = F(i_+(v)) = 2 m = \frac{n}{2}.
\end{align*}

Now suppose $n \ne 4m$ for some integer $m$. Let $k = n \mod 4$, so
that $n = 4m +k$. Using the same graph $G \in \cG_{4m}$ as before,
let us now add $k$ vertices sequentially, with each addition adjacent
to every current vertex in the graph, as in Lemma \ref{addvertex}. By Lemma \ref{addvertex}, the Fiedler vector remains unchanged, with zeros in the entries of the added vertices.

In this case we have $i_0(y)$ non-empty. However, irrespective of how we choose to distribute these vertices, we still have similar bounds on the cut. The cut with respect to $y$ must necessarily increase. For the vector $\tilde v = \begin{pmatrix} v \\ 0 \end{pmatrix}$, we have the upper bound
\begin{align*}
&F(i_+(\tilde v) \cup U)  < \frac{(k+1)n}{2} \le 2n,
 \end{align*}
where $U \subset i_0(\tilde v)$. \\

Looking at both cases together, with $\tilde v = v$ for $k=0$, we have
\begin{eqnarray*}
 |F(i_+(y) \cup U_1) - \min_{A \subset V, |A|= \lfloor \frac{n}{2} \rfloor }  F(A) | & \ge & |F(i_+(y) \cup U_1) -  F(i_+(\tilde v) \cup \tilde U_1) | \\
& \ge &  \big| \frac{n^2}{16} - 2n \big| >   \big| \frac{n^2}{16} - \frac{n^2}{24} \big| = \frac{n^2}{384}
\end{eqnarray*}
for any $U_1 \subset i_0(y)$, $\tilde U_1 \subset i_0( \tilde
v)$.
That completes the lower bound. The upper bound results from
considering an upper bound on the size of the graph.
\end{proof}
\medskip

\section*{Acknowledgments}

The authors would like to thank Louisa Thomas for improving the style of presentation.


\begin{thebibliography}{10}
\providecommand{\url}[1]{\normalfont{#1}}
\providecommand{\urlprefix}{Available from: }

\bibitem{garey1979computers}
Garey~MR, Johnson~DS. Computers and intractibility: a guide to the theory of
  np-completeness. WH Freeman and Company, New York. 1979;\hspace{0pt}18:41.

\bibitem{Metis0}
Karypis~G, Kumar~V. A parallel algorithm for multilevel graph partitioning and
  sparse matrix ordering. Journal of Parallel and Distributed Computing.
  1998;\hspace{0pt}48:71--85;
  \urlprefix\url{http://glaros.dtc.umn.edu/gkhome/node/106}.

\bibitem{Metis1}
Karypis~G, Kumar~V. Parallel multilevel {$k$}-way partitioning scheme for
  irregular graphs. SIAM Rev. 1999;\hspace{0pt}41(2):278--300 (electronic);
  \urlprefix\url{http://dx.doi.org/10.1137/S0036144598334138}.

\bibitem{1990PothenA_SimonH_LiouK-aa}
Pothen~A, Simon~HD, Liou~KP. Partitioning sparse matrices with eigenvectors of
  graphs. SIAM J Matrix Anal Appl. 1990;\hspace{0pt}11(3):430--452; sparse
  matrices (Gleneden Beach, OR, 1989);
  \urlprefix\url{http://dx.doi.org/10.1137/0611030}.

\bibitem{1988PowersD-aa}
Powers~DL. Graph partitioning by eigenvectors. Linear Algebra Appl.
  1988;\hspace{0pt}101:121--133;
  \urlprefix\url{http://dx.doi.org/10.1016/0024-3795(88)90147-4}.

\bibitem{urschel2014cascadic}
Urschel~JC, Hu~X, Xu~J, Zikatanov~LT. A cascadic multigrid algorithm for
  computing the fiedler vector of graph laplacians. arXiv preprint
  arXiv:14120565. 2014;\hspace{0pt}.

\bibitem{spielman1996spectral}
Spielman~DA, Teng~SH. Spectral partitioning works: Planar graphs and finite
  element meshes. In: Foundations of Computer Science, 1996. Proceedings., 37th
  Annual Symposium on. IEEE; 1996. p. 96--105.

\bibitem{2010DiestelR-aa}
Diestel~R. Graph theory. 4th ed; Vol. 173 of Graduate Texts in Mathematics.
  Heidelberg: Springer; 2010.

\bibitem{F1}
Fiedler~M. Algebraic connectivity of graphs. Czechoslovak Math J.
  1973;\hspace{0pt}23(98):298--305.

\bibitem{F2}
Fiedler~M. A property of eigenvectors of nonnegative symmetric matrices and its
  application to graph theory. Czechoslovak Math J.
  1975;\hspace{0pt}25(100)(4):619--633.

\bibitem{F3}
Fiedler~M. Laplacian of graphs and algebraic connectivity. In: Combinatorics
  and graph theory ({W}arsaw, 1987). Vol.~25 of Banach Center Publ.; Warsaw:
  PWN; 1989. p. 57--70.

\bibitem{wei1989towards}
Wei~YC, Cheng~CK. Towards efficient hierarchical designs by ratio cut
  partitioning. In: Computer-Aided Design, 1989. ICCAD-89. Digest of Technical
  Papers., 1989 IEEE International Conference on. IEEE; 1989. p. 298--301.

\bibitem{wei1991ratio}
Wei~YC, Cheng~CK. Ratio cut partitioning for hierarchical designs.
  Computer-Aided Design of Integrated Circuits and Systems, IEEE Transactions
  on. 1991;\hspace{0pt}10(7):911--921.

\bibitem{M1}
Merris~R. Laplacian matrices of graphs: a survey. Linear Algebra Appl.
  1994;\hspace{0pt}197/198:143--176; second Conference of the International
  Linear Algebra Society (ILAS) (Lisbon, 1992);
  \urlprefix\url{http://dx.doi.org/10.1016/0024-3795(94)90486-3}.

\bibitem{M4}
Merris~R. A survey of graph {L}aplacians. Linear and Multilinear Algebra.
  1995;\hspace{0pt}39(1-2):19--31;
  \urlprefix\url{http://dx.doi.org/10.1080/03081089508818377}.

\bibitem{M2}
Merris~R. Laplacian graph eigenvectors. Linear Algebra Appl.
  1998;\hspace{0pt}278(1-3):221--236;
  \urlprefix\url{http://dx.doi.org/10.1016/S0024-3795(97)10080-5}.

\bibitem{M3}
Merris~R. A note on {L}aplacian graph eigenvalues. Linear Algebra Appl.
  1998;\hspace{0pt}285(1-3):33--35;
  \urlprefix\url{http://dx.doi.org/10.1016/S0024-3795(98)10148-9}.

\bibitem{Mohar91}
Mohar~B. The laplacian spectrum of graphs. In: Graph Theory, Combinatorics, and
  Applications. Wiley; 1991. p. 871--898.

\bibitem{1991MoharB-aa}
Mohar~B. The {L}aplacian spectrum of graphs. In: Graph theory, combinatorics,
  and applications. {V}ol.\ 2 ({K}alamazoo, {MI}, 1988). Wiley-Intersci. Publ.;
  New York: Wiley; 1991. p. 871--898.

\bibitem{1997ChanT_CiarletP_SzetoW-aa}
Chan~TF, Ciarlet~P~Jr, Szeto~WK. On the optimality of the median cut spectral
  bisection graph partitioning method. SIAM J Sci Comput.
  1997;\hspace{0pt}18(3):943--948;
  \urlprefix\url{http://dx.doi.org/10.1137/S1064827594262649}.

\bibitem{Lux}
Luxburg~U. A tutorial on spectral clustering. Statistics and Computing. 2007
  Dec;\hspace{0pt}17(4):395--416;
  \urlprefix\url{http://dx.doi.org/10.1007/s11222-007-9033-z}.

\bibitem{urschel2014spectral}
Urschel~JC, Zikatanov~LT. Spectral bisection of graphs and connectedness.
  Linear Algebra and its Applications. 2014;\hspace{0pt}449:1--16.

\bibitem{guattery1995performance}
Guattery~S, Miller~GL. On the performance of spectral graph partitioning
  methods. In: Proceedings of the sixth annual ACM-SIAM symposium on Discrete
  algorithms. Society for Industrial and Applied Mathematics; 1995. p.
  233--242.

\bibitem{guattery1998quality}
Guattery~S, Miller~GL. On the quality of spectral separators. SIAM Journal on
  Matrix Analysis and Applications. 1998;\hspace{0pt}19(3):701--719.

\bibitem{de2007old}
de~Abreu~NMM. Old and new results on algebraic connectivity of graphs. Linear
  algebra and its applications. 2007;\hspace{0pt}423(1):53--73.

\end{thebibliography}
\end{document}